\setlist[enumerate,1]{label=\alph*),ref=\theenumi}
\newtheorem{lemma}{Lemma}[section]
\newtheorem{proposition}[lemma]{Proposition}
\newtheorem{theorem}[lemma]{Theorem}
\newtheorem{example}[lemma]{Example}
\newtheorem{remark}[lemma]{Remark}
\newcommand{\R}{\mathbb{R}}      
\newcommand{\N}{\mathbb{N}}                        
\newcommand{\Lcal}{\mathcal{L}}                         
\newcommand{\Jcal}{\mathcal{J}}                         
\newcommand{\embed}{\hookrightarrow}
\newcommand{\abs}[1]{|#1|}
\DeclareMathOperator{\LspaceSymbol}{\mathbf{L}} 
\newcommand{\Lpspace}[1][p]{\LspaceSymbol^{{#1}}}
\newcommand{\LGspace}[1][\DefaultGfunction]{\LspaceSymbol^{{#1}}}
\newcommand{\LGastspace}[1][\DefaultGfunction]{\LspaceSymbol^{{#1^\star}}}
\DeclareMathOperator{\WspaceSymbol}{\mathbf{W}} 
\newcommand{\WLGspace}[1][\DefaultGfunction] { {\WspaceSymbol^1}\LspaceSymbol^{{#1}} }
\newcommand{\norm}[1]{\|#1\|}                           
\newcommand{\inner}[2]{\left\langle #1,#2\right\rangle} 
\newcommand{\LGnorm}[2][\DefaultGfunction]{\norm{#2}_{\LGspace[{#1}]}}
\newcommand{\LGastnorm}[2][{\DefaultGfunction}]{\norm{#2}_{\LGastspace[{#1}]}}
\newcommand{\WLGnorm}[2][\DefaultGfunction]{\norm{#2}_{\WLGspace[{#1}]}}
\newcommand{\WLGastnorm}[2][\DefaultGfunction]{\norm{#2}_{(\WLGspace[{#1}])^{\star}}}
\newcommand{\WLGTspace}[1][\DefaultGfunction]{ {\WspaceSymbol^1_T}\LspaceSymbol^{{#1}}}
\newcounter{Fcounter}
\newcounter{Vcounter}
\begin{document}


\title[Two periodic solutions to general anisotropic operator]{Existence of two periodic solutions to general anisotropic Euler-Lagrange equations }

\begin{abstract}
This paper is concerned with the following Euler-Lagrange system
\begin{equation*}
\begin{cases}
\frac{d}{dt}\Lcal_v(t,u(t),\dot u(t))=\Lcal_x(t,u(t),\dot u(t))\quad \text{ for a.e. }t\in[-T,T],\\
u(-T)=u(T),
\end{cases}
\end{equation*} where Lagrangian is given by $\Lcal=F(t,x,v)+V(t,x)+\langle f(t), x\rangle$, growth conditions are determined by an anisotropic G-function and some geometric conditions at infinity. We consider two cases: with and without forcing term $f$.
Using a general version of the  Mountain Pass Theorem  and Ekeland's variational principle we prove the existence of at least two nontrivial periodic solutions in an anisotropic Orlicz-Sobolev space.
\end{abstract}

\author{M. Chmara}

\address{
Department of Technical Physics and Applied Mathematics,
Gda\'{n}sk University of Technology,
Narutowicza 11/12, 80-233 Gda\'{n}sk, Poland
}
\email{magdalena.chmara@pg.edu.pl}

\keywords{
 anisotropic Orlicz-Sobolev space,
Euler-Lagrange equation,
Mountain Pass Theorem,
Palais Smale condition, Ekeland's Variational Principle
}
\subjclass[2010]{46E30 ,  46E40}

\maketitle


\section{Introduction}

We consider the  second order system:
\begin{equation}
\tag{AELT}
\label{eq:AELT}
\begin{cases}
\frac{d}{dt}\Lcal_v(t,u(t),\dot u(t))=\Lcal_x(t,u(t),\dot u(t))\quad \text{ for a.e. }t\in I,\\
u(-T)=u(T),
\end{cases}
\end{equation}
where $I=[-T,T]$, $|I|\geq1$ and $\Lcal\colon I\times\R^N\times\R^N\to \R$ is given by
\begin{equation*}
\Lcal(t,x,v)=F(t,x,v)+V(t,x)+\inner{f(t)}{x}.
\end{equation*}

The growth of $\Lcal$ is determined by function $G$ such that:
\begin{enumerate}[ref=(G),label=(G)]
	\item \label{asm:G}
	$G\colon \R^N\to [0,\infty)$ is a continuously differentiable G-function (i.e. $G$ is convex, even, $G(0)=0$ and $G(x)/|x|\to \infty$ as $|x|\to \infty$) satisfying $\Delta_2$ and $\nabla_2$ conditions (at infinity).
\end{enumerate}
In section \ref{sec:preliminaries} we use $G$ to define an Orlicz space.
The following assumption on $F$, $V$ and $f$ will be needed throughout the paper.

Function $F\colon I\times\R^N\times\R^N\to \R$ is of class $C^1$ and satisfy
\begin{enumerate}[label=(F$_\arabic*$),ref=(F$_\arabic*$)]
	\item
	\label{asm:F:convex}
	$F(t,x,\cdot)$ is convex for all $(t,x)\in I\times \R^N$,
	\item
	\label{asm:F:growth}
	there exist $a\in C(\R_+,\R_+)$ and $b\in\Lpspace[1](I,\R_+)$ such that for all $(t,x,v)\in I\times \R^N\times\R^N$:
	\begin{gather}
	\label{F:growth:F}
	|F(t,x,v)|\leq a(|x|)\,(b(t)+G(v)),\\
	\label{F:growth:Fx}
	|F_x(t,x,v)|\leq a(|x|)\,(b(t)+G(v)), \\
	\label{F:growth:Fv}
	G^{\ast}(F_v(t,x,v))\leq a(|x|)\,(b(t)+\,G^{\ast}\left(\nabla G(v)\right)),
	\end{gather}
	\item \label{asm:F:AR}
	there exists $\theta_{F} > 0 $ such that for all $(t,x,v)\in I\times \R^N\times\R^N$:
	\begin{gather*}
	\inner{F_x(t,x,v)}{x} + \inner{F_v(t,x,v)}{v} \leq \theta_F\, F(t,x,v),
	\end{gather*}
	\item \label{asm:F:ellipticity}
	there exists $\Lambda>0$ such that for all $(t,x,v)\in I\times \R^N\times\R^N$:
	\begin{equation*}
	F(t,x,v) \geq \Lambda\, G(v),
	\end{equation*}
	\item \label{asm:F:zero}
	$F(t,x,0) = 0$, $F_v(t,x,0) = 0$ for all $(t,x)\in I\times \R^N$,
	\setcounter{Fcounter}{\value{enumi}}
\end{enumerate}

On potential $V\in C^1( I\times\R^N, \R)$ we assume:

\begin{enumerate}[label=$(V_{\arabic*})$]
	\item\label{V:K-W}
	$V(t,x)=K(t,x)-W(t,x)$ 
	for $x\in\R^N$, $t\in I$,
	\item \label{V:ARad} there exist $M>0$, $\varepsilon_V>0$, $\theta_V>\theta_F$, $1<p_K\leq \theta_V-\varepsilon_V$, such that
	\begin{equation}
	\label{V:AR}
	\inner{V_x(t,x)}{x}\leq (\theta_V-\varepsilon_V) K(t,x)-\theta_V	W(t,x)\quad
	\text{ and}
	\end{equation}
	\begin{equation}
	\label{V:W>K>pk}
	W(t,x)>K(t,x)>|x|^{p_K}
	\end{equation}
	for $|x|>M$, $t\in I$,
	\item \label{V:>G}
	there exist $\rho$, $b>1$ and $g\in\LGspace[1](I,\R)$, such that $V(t,x)\geq b\, G(x)-g(t) $ for 
	$G(x/(2|I|))\leq\rho/2$,
	\item\label{V:zero} $\int_I V(t,0)\,dt=0$ for $t\in I$. 
	\setcounter{Vcounter}{\value{enumi}}
\end{enumerate}
We assume that the forcing term $f$ belongs to the space $\LGastspace$ and
\begin{enumerate}[label=(f)]
	\item\label {f:IntG*f<}
	$\int_IG^{\ast}(f(t))+g(t)\,dt<\min\{\Lambda,b-1\}\rho$.
\end{enumerate}

Using the general form of the Mountain Pass Theorem  and Ekeland's Variational Principle we show that the problem \eqref{eq:AELT} has at least two nontrivial solutions in an anisotropic Orlicz-Sobolev space (theorems \ref{thm:main1}, \ref{thm:main3} and \ref{thm:main2}).

Problems similar to \eqref{eq:AELT} were considered e.g. in \cite{Dao16,Ter12,ZhaYua17} for $F(t,x,v)=|v|^p$, in \cite{AciMaz17} for $F$ being an isotropic G-function and in \cite{AciMaz19,ChmMak19} (periodic problem) for $F$ being an anisotropic G-function.
Recently  in \cite{ChmMak20} authors proved the existence of a Dirichlet problem, where $F$ is in general form and satisfies \ref{asm:F:convex}-\ref{asm:F:zero}.

In \cite{Dao16,Ter12,ZhaYua17,ChmMak19,ChmMak20} the existence of a mountain pass type solution was shown using the well known Mountain Pass Theorem by Ambrosetti and Rabinowitz \cite{AmbRab73}. 
One of the assumptions of this theorem is that the action functional is greater than zero on the boundary of some ball,
i.e.
\begin{equation}
\label{asm:mpt_ar_J>0}
\text{there exists $r>0$ such that if $\norm{x}= r$, then $J(x)>0$}.
\end{equation}   
In the Orlicz-Sobolev space norm is a sum of Luxemburg norms $\WLGnorm{u}=\LGnorm{\dot u}+\LGnorm{u}$ (see more in section \ref{sec:preliminaries}).
To apply theorem of Ambrosetti and Rabinowitz to the anisotropic Orlicz-Sobolev space setting it was necessary 
to establish connections between Luxemburg norm $\LGnorm{\cdot}$ and modular $\int_IG(\cdot)\,dt$ (see papers \cite{ChmMak19,ChmMak20,BarCia17}).
%
%

It turns out that the ball $B_r=\{u\in\WLGspace\colon \WLGnorm{u}<r\}$ is not the most suitable set to obtain the mountain pass geometry  in the anisotropic case (an example explaining this fact can be found in section \ref{sec:example}).
Therefore, instead of the ball, we use set 
\begin{equation}
\label{omega}
\Omega=\Phi^{-1}([0,\rho))=\left\{u\in\WLGspace: \int_I G(\dot u)+G(u)\,dt<\rho _0 \right\},
\end{equation}
where 
$\Phi:\WLGspace\to[0,\infty)$ is given by 
$\Phi(u)=\int_IG(\dot u)+G(u)\,dt.
$ $\Phi$ is not a norm, but it is better suited to geometric idea of MPT in the anisotropic case.

In the literature we can find a lot of versions of the Mountain Pass Theorem.
In our case we use general form of the MPT with a bounded open neighborhood instead of a ball. 
The following theorem is a direct consequence of theorem 4.10 from \cite{MawWil89}.

\begin{theorem}
	\label{thm:MPT_mw}
	Let $X$ be a Banach space and $J\in C^1(X,\R)$. Assume that
	there exist $e_0, e_1\in X$ and a bounded open neighborhood $\Omega$ of $e_0$ such that $e_1\in X\backslash\Omega$ and
	\begin{equation}
	\label{cond:geom_mpt}
	\inf_{u\in\partial\Omega}J(u)>\max\{J(e_0),J(e_1)\}.
	\end{equation} 	 
	Let \[\Gamma=\{g\in C([0,1],X):~ g(0)=e_0,~ g(1)=e_1\}\]
	and
	\[
	c=\inf_{g\in\Gamma}\max_{s\in[0,1]}J(g(s)),
	\]
	If $J$ satisfies the Palais-Smale condition, then $c$ is a critical value of $J$ and \[c>\max\{J(e_0),J(e_1)\}.\] 
\end{theorem}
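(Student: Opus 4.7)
The plan is to derive this version of the Mountain Pass Theorem from the general minimax principle of Mawhin-Willem by treating $\partial\Omega$ as a topological linking set for the class of paths $\Gamma$.

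First, I would establish the intersection (linking) property: every $g\in\Gamma$ satisfies $g([0,1])\cap\partial\Omega\neq\emptyset$. This is a purely topological fact. Since $\Omega$ is open with $e_0=g(0)\in\Omega$ and $e_1=g(1)\in X\setminus\Omega$, the sets $g^{-1}(\Omega)$ and $g^{-1}(X\setminus\overline{\Omega})$ are disjoint open subsets of $[0,1]$ which, by connectedness of $[0,1]$, cannot together cover it; hence $g$ must meet $\partial\Omega$. Combined with the hypothesis, this already yields
\[
c=\inf_{g\in\Gamma}\max_{s\in[0,1]}J(g(s))\;\geq\;\inf_{u\in\partial\Omega}J(u)\;>\;\max\{J(e_0),J(e_1)\},
\]
which is the strict inequality asserted in the conclusion.

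Next, I would show that $c$ is a critical value, arguing by contradiction. If $c$ were regular, then since $J$ satisfies the Palais-Smale condition the quantitative deformation lemma furnishes $\varepsilon\in(0,\tfrac12(c-\max\{J(e_0),J(e_1)\}))$ and a continuous deformation $\eta\colon[0,1]\times X\to X$ with $\eta(1,\{J\leq c+\varepsilon\})\subset\{J\leq c-\varepsilon\}$ and $\eta(t,u)=u$ whenever $J(u)\notin[c-2\varepsilon,c+2\varepsilon]$. By the choice of $\varepsilon$, both $e_0$ and $e_1$ are fixed by $\eta$, so picking $g\in\Gamma$ with $\max_sJ(g(s))\leq c+\varepsilon$ and defining $\tilde g(s)=\eta(1,g(s))$ yields a path still in $\Gamma$ whose maximum is bounded by $c-\varepsilon$, contradicting the definition of $c$.

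The one step that requires care is the invariance of $\Gamma$ under the deformation, which reduces to checking that $\eta$ fixes the two endpoints; this is guaranteed by the strict separation $\max\{J(e_0),J(e_1)\}<c$ extracted in the first step. No feature of the anisotropic Orlicz-Sobolev framework enters the argument, so the result follows as an abstract linking theorem, exactly along the lines of Theorem 4.10 in \cite{MawWil89}.
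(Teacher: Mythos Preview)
Your argument is correct. The paper, however, does not supply a self-contained proof of this statement: it simply records that the theorem is a direct consequence of Theorem~4.10 in \cite{MawWil89}, the only adjustment being that the global Palais--Smale condition assumed here trivially implies the Palais--Smale condition at the level $c$ required there. What you have written is essentially the standard proof of that cited result---the linking property of $\partial\Omega$ via connectedness of $[0,1]$, followed by the deformation-lemma contradiction---so you have unpacked the reference rather than offered a different route. Your proof is more informative for a reader without access to \cite{MawWil89}; the paper's treatment is shorter but relies entirely on the citation.
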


	In fact, there is only one difference between theorem \ref{thm:MPT_mw} and theorem 4.10 from \cite{MawWil89}. In \cite{MawWil89} it is assumed that $J$ satisfies the Palais Smale condition at the level $c$. When the Palais-Smale condition is satisfied (i.e.
	$\{J(u_n)\}$ is bounded and  $J'(u_n ) \to 0$)
	, we can check immediately that the Palais Smale condition at the level $c$
	(i.e.
	$\{J(u_n)\}\to c$ and $J'(u_n ) \to 0$)
	holds for all $c\in\R$ \cite[p. 16]{Jab03}.

In the proof of the first main theorem \ref{thm:main1} we show that our action functional satisfies conditions of theorem \ref{thm:MPT_mw} where $\Omega$ is given by \eqref{omega}.

Using the Ekeland Variational Principle we show the existence of a second nontrivial solution of \eqref{eq:AELT}, which belongs to the interior of $\Omega$.
The existence of two solutions was investigated for example in \cite{ZhaYua17} for ordinary $p$-Laplacian systems and in \cite{YuaZha16}  for
$p(t)$-laplacian systems.
In these papers it was shown that there exists $u_2$  such that
\begin{equation*}
\label{eq:J<0}
\Jcal(u_2)=\inf_{u\in\overline {B_{r}}}\Jcal(u)\leq0 
\quad \left(\text{or } <0, \text{ in \cite{ZhaYua17}}\right)
\end{equation*}
and that there exists a minimizing sequence $\{v_n\}$ which is a Palais-Smale sequence of $\Jcal$,
contained
in a small ball centered at 0. In our case we use similar methods, but instead of the ball  $B_{r}$ we take $\Omega=\Phi^{-1}([0,\rho))$. Since $\Omega$ is not a ball we cannot simply cite \cite{ZhaYua17,YuaZha16}. Our proof is based on concepts of \cite{MawWil89} and \cite{AmbHaj18}.

We shall distinguish cases with and without forcing.
In the case with forcing it is enough that 
\begin{equation*}
\inf_{\overline {u}\in\Omega}\Jcal(u)\leq 0.
\end{equation*}
In the case without forcing $u_0\equiv 0$ is a trivial solution of \eqref{eq:AELT}, so
we need the sharp inequality.  
To obtain this we need additional assumptions:
\begin{enumerate}[label=(F$_\arabic*$),ref=(F$_\arabic*$)]
	\setcounter{enumi}{\value{Fcounter}}
	\item \label{asm:F:nabla2type}	there exist $\zeta_F>1$, $\lambda_0\in(0,1)$ such that
	\begin{equation*}
	F(t,\lambda x,\lambda v)\leq \lambda^{\zeta_F}F(t,x,v)
	\end{equation*}
	for $\lambda\in(0,\lambda_0), t\in I$, $G\left(\frac{x}{2|I|}\right)\leq\rho/2$,
\end{enumerate}

\begin{enumerate}[label=(V$_\arabic*$),ref=(V$_\arabic*$)]
	\setcounter{enumi}{\value{Vcounter}}
	\item \label{V:nabla2type}
	there exist $\zeta_K, \zeta_W>1$,  $\zeta_W<\min\{\zeta_F,\zeta_K\},$ $\lambda_0\in(0,1)$ such that $W(t,x)>0$ and
	\begin{equation*}
	V(t,\lambda x)\leq \lambda^{\zeta_K}K(t,x)-\lambda^{\zeta_W}W(t,x)
	\end{equation*}
	for $\lambda\in(0,\lambda_0), t\in I$, $G\left(\frac{x}{2|I|}\right)\leq\rho/2$.
\end{enumerate}

\section{Preliminaries}
\label{sec:preliminaries}

A function $G^\ast(y)=\sup_{x\in\R^N}\{\inner{x}{y}-G(x)\}$ is called the Fenchel conjugate of
$G$. As an immediate consequence of the definition we have the Fenchel inequality:
\begin{equation*}
\label{ineq:Fenchel}
\forall_{x,y\in\R^N}\ \inner{x}{y}\leq G(x)+G^{\ast}(y).
\end{equation*}

Now we briefly recall the notion of anisotropic Orlicz spaces. For more details we refer the reader to
\cite{Sch05} and \cite{ChmMak17}. The Orlicz space associated with $G$ is defined to be
\[
\LGspace =\LGspace(I,\R^N)= \{u\colon I\to \R^N: \int_{I} G(u) \,dt<\infty\}.
\]
The space $\LGspace$ equipped with the Luxemburg norm
\[
\LGnorm{u}=\inf\left\{\lambda>0: \int_{I} G\left(\frac{u}{\lambda}\right)\, dt\leq 1 \right\}
\]
is a reflexive Banach space. We have the H\"older inequality
\[
\int_I \inner{u}{v}\,dt\leq 2\LGnorm{u}\LGnorm[G^\ast]{v}
\quad\text{ for every $u\in \LGspace$ and $v\in \LGspace[G^\ast]$.}
\]
Let us denote by
\[
\WLGspace=\WLGspace(I,\R^N)=\left\{u\in \LGspace\colon \dot{u}\in \LGspace \right\}
\]
an anisotropic Orlicz-Sobolev space of vector valued functions with the usual norm
\[
\WLGnorm{u}=\LGnorm{u}+\LGnorm{\dot u}.
\]
It is known that elements of $\WLGspace$ are absolutely continuous functions.

We introduce the following subset of $\WLGspace$ 
\begin{equation*}
\WLGTspace=\left\{u\in \WLGspace : ~	u(-T)=u(T)\right\}.
\end{equation*}

Functional $R_G:\LGspace\to[0,\infty)$ given by formula
$R_G(u):=\int_IG(u)\,dt$ is called modular (see
\cite{KhaKoz15} and \cite{Mus83}). 
For Lebesgue spaces  notions of modular and norm are indistinguishable because
\[
\LGnorm[p]{u}=\left(\int_{I}|u(t)|^p\,dt\right)^{1/p}
\] 

In the Orlicz space this relation is more complicated.
One can easily see, that 
\begin{equation}
\label{eq:modular_norm_relation}
R_G(u) \leq
\LGnorm{u} \text{ for }
\LGnorm{u}\leq 1  \text{ and } R_G(u) > \LGnorm{u} \text{ for }
\LGnorm{u}> 1.
\end{equation}

The modular $R_G$ is coercive in the following sense \cite[prop. 2.7]{Le14}:
\begin{equation} 
\label{prop:RGcoercive}
\lim_{\LGnorm{u}\to\infty}\frac{R_G(u)}{\LGnorm{u}}=\infty.
\end{equation}
%

The following lemma is crucial to theorems   \ref{thm:main1} and \ref{thm:main3}.
We will use it to show that $\Jcal\rvert_{\partial\Omega}>0$  and that $\Jcal$ is negative in the interior of $\Omega$. 
\begin{lemma}
	\label{lem:RG>G}
	If $|I|\geq 1$ then $R_G(\dot u)+R_G(u)\geq 2 G\left(\frac{u(t)}{2\abs{I}}\right)$ for $t\in I$, $u\in\WLGspace$.
\end{lemma}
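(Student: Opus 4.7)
The plan is to exploit absolute continuity of $u$, convexity and evenness of $G$, and Jensen's inequality, averaging the resulting pointwise bound against the auxiliary variable $s$ to produce the modulars on the right-hand side.

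First I would fix $t\in I$ and, for an arbitrary auxiliary point $s\in I$, use absolute continuity to write $u(t)=u(s)+\int_s^t\dot u(\tau)\,d\tau$, which rearranges as
\begin{equation*}
\frac{u(t)}{2|I|}=\frac{1}{2}\cdot\frac{u(s)}{|I|}+\frac{1}{2}\cdot\frac{1}{|I|}\int_s^t\dot u(\tau)\,d\tau.
\end{equation*}
Convexity of $G$ then gives
\begin{equation*}
G\!\left(\frac{u(t)}{2|I|}\right)\leq \frac{1}{2}\,G\!\left(\frac{u(s)}{|I|}\right)+\frac{1}{2}\,G\!\left(\frac{1}{|I|}\int_s^t\dot u(\tau)\,d\tau\right).
\end{equation*}
Because $|I|\ge 1$, the scalar $1/|I|$ lies in $[0,1]$; convexity together with $G(0)=0$ therefore yields the standard sublinear bound $G(u(s)/|I|)\leq G(u(s))/|I|$. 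For the second term I would apply Jensen's inequality with the normalized Lebesgue measure on $I$ to the function $\chi_{[s,t]}\dot u$ (the sign change when $s>t$ is harmless since $G$ is even), using $G(0)=0$ to get
\begin{equation*}
G\!\left(\frac{1}{|I|}\int_s^t\dot u(\tau)\,d\tau\right)\leq \frac{1}{|I|}\int_s^t G(\dot u(\tau))\,d\tau\leq \frac{R_G(\dot u)}{|I|}.
\end{equation*}

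Combining the two bounds yields, for every $s\in I$,
\begin{equation*}
G\!\left(\frac{u(t)}{2|I|}\right)\leq \frac{G(u(s))}{2|I|}+\frac{R_G(\dot u)}{2|I|}.
\end{equation*}
Integrating this inequality in $s$ over $I$ and dividing by $|I|$ produces
\begin{equation*}
G\!\left(\frac{u(t)}{2|I|}\right)\leq \frac{R_G(u)}{2|I|^{2}}+\frac{R_G(\dot u)}{2|I|},
\end{equation*}
and, since $|I|\ge 1$, both $1/|I|^2$ and $1/|I|$ are bounded by $1$, giving $2G(u(t)/(2|I|))\leq R_G(u)+R_G(\dot u)$, as desired.

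I do not expect a real obstacle here: the argument is essentially the Orlicz analogue of the standard Sobolev embedding into $L^\infty$ on a bounded interval. The only delicate point is the accounting of the factors of $|I|$ — one must pick up one $1/|I|$ from the sublinear scaling of $G$ and one $1/|I|$ from the Jensen averaging, and then the assumption $|I|\ge 1$ is precisely what is needed to discard the resulting $1/|I|^2$ and $1/|I|$ factors at the last step.
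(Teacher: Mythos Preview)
Your proof is correct. The overall strategy—absolute continuity, convexity of $G$, Jensen's inequality, and the hypothesis $|I|\ge 1$ to discard scaling factors—is the same as the paper's, but you differ in one substantive step. The paper selects a single point $t_0\in I$ with $u(t_0)=\frac{1}{|I|}\int_I u\,dt$ and then bounds $G\bigl((u(t)-u(t_0))/|I|\bigr)$ and $G(u(t_0))$ separately; you instead keep $s$ free and average the pointwise inequality over $s\in I$. Your averaging trick is a clean way to produce $R_G(u)$ directly, and it has the added advantage of being fully rigorous for vector-valued $u$: the paper's mean-value step, as written, relies on the existence of $t_0$ with $u(t_0)$ equal to the average of $u$, which can fail when $N\ge 2$ (take $u(t)=(\cos t,\sin t)$ on $[-\pi,\pi]$). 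So your route is both essentially equivalent in spirit and slightly more robust.
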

\begin{proof}
	Let $u\in\WLGspace$. Since $u$ is absolutely continuous, there exist $t_0\in I$ such that
	\begin{equation}
	\label{eq:u_meanval}
	\begin{split}
	&u(t_0)=\frac{1}{\abs{I}}\int_{I}u\,dt\quad\text{ and}\\ 
	&u(t)-u(t_0)=\int_{t_0}^{t}\dot u \,dt\quad\text{for }t\in I.
	\end{split}
	\end{equation}
	\[\]
	
	Hence, by convexity of $G$ and Jensen's integral inequality we obtain 
	\begin{multline*}
	\int_IG(\dot u)\,dt+\int_I G(u)\,dt \geq\frac{|I|}{|t-t_0|}\int_{t_0}^tG\left(\frac{|t-t_0|}{|I|}\dot u\right)\,dt+\int_IG(u)\,dt\geq\\\geq |I|G\left(\frac{1}{|I|}\int_{t_0}^t\dot u\right)+|I|G\left(\frac{1}{\abs{I}}\int_{I}u\right)=\\=|I|\left(G\left(\frac{u(t)-u(t_0)}{|I|}\right)+G\left(u(t_0)\right)\right).
	\end{multline*}
	
	Since $\abs{I}\geq 1$ and G is convex, we have
	\begin{multline*}
	R_G(\dot u)+R_G(u)\geq G\left(\frac{u(t)-u(t_0)}{|I|}\right)+G\left(u(t_0)\right)\geq\\\geq G\left(\frac{u(t)-u(t_0)}{\abs{I}}\right)+G\left(\frac{u(t_0)}{\abs{I}}\right)\geq  2 G\left(\frac{u(t)}{2\abs{I}}\right).
	\end{multline*}
\end{proof}
In the proof of the existence of a second solution we will need the following inequality by Brezis and Lieb \cite[lem. 3]{BreLie83} (see also \cite[lem. 4.7]{KhaKoz15}).
\begin{lemma}
	\label{lem:BreLie}
	For all $k>1$, $0<\epsilon<\frac1k$, $x,y\in\R^N$
	\[
	|G(x+y)-G(x)|\leq \epsilon|G(kx)-kG(x)|+2G(C_\epsilon y),
	\]
	where $C_\epsilon=\frac{1}{\epsilon(k-1)}$.
\end{lemma}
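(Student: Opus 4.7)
The plan is to establish both halves of the absolute value by applying Jensen's inequality to a carefully chosen three-term convex decomposition of $x+y$. Specifically, I would write
\[
x+y \;=\; \epsilon\,(kx) \;+\; (1-\epsilon k)\,x \;+\; \epsilon(k-1)\,(C_\epsilon\, y),
\]
and verify that, under the hypotheses $k>1$ and $0<\epsilon<1/k$, the three coefficients $\epsilon$, $1-\epsilon k$, and $\epsilon(k-1)$ are all nonnegative and sum to one, while the definition $C_\epsilon=1/(\epsilon(k-1))$ makes the vector identity correct. Convexity of $G$ then gives
\[
G(x+y) \;\leq\; \epsilon\, G(kx) \;+\; (1-\epsilon k)\, G(x) \;+\; \epsilon(k-1)\, G(C_\epsilon y),
\]
which, after subtracting $G(x)$ and regrouping the scalar multiples, reads
\[
G(x+y) - G(x) \;\leq\; \epsilon\bigl(G(kx)-k\,G(x)\bigr) + \epsilon(k-1)\, G(C_\epsilon y).
\]
Since $G$ is convex with $G(0)=0$, the inequality $k\,G(x)\leq G(kx)$ is automatic (it is Jensen applied to $x=\tfrac{1}{k}(kx)+(1-\tfrac{1}{k})\cdot 0$), so the bracket on the right equals $|G(kx)-kG(x)|$; moreover $\epsilon(k-1)<1<2$, yielding the upper half of the target inequality.

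For the complementary direction $G(x)-G(x+y)\leq\cdots$, I would exploit the midpoint identity $x=\tfrac12(x+y)+\tfrac12(x-y)$ together with convexity to obtain
\[
2\,G(x)\;\leq\; G(x+y)+G(x-y), \qquad\text{hence}\qquad G(x)-G(x+y) \;\leq\; G(x-y)-G(x),
\]
and then apply the inequality just established with $-y$ in place of $y$. Since $G$ is even, $G(C_\epsilon(-y))=G(C_\epsilon y)$, so the same right-hand side appears and we conclude $G(x)-G(x+y)\leq\epsilon|G(kx)-kG(x)|+2G(C_\epsilon y)$. Combining the two one-sided bounds delivers the absolute-value inequality with the claimed constants.

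The main obstacle is really just spotting the correct three-term decomposition. Once one insists that subtracting $G(x)$ from the Jensen estimate recombines into a multiple of $G(kx)-k\,G(x)$, the weights $(\epsilon,\;1-\epsilon k,\;\epsilon(k-1))$ are essentially forced; what remains is the routine algebraic identity $\epsilon(k-1)\,C_\epsilon=1$ and the sign checks $1-\epsilon k>0$ and $\epsilon(k-1)>0$, both of which follow directly from $0<\epsilon<1/k$ and $k>1$. No structural property of $G$ beyond convexity, evenness, and $G(0)=0$ is used, so neither the $\Delta_2$ nor the $\nabla_2$ condition enters the argument.
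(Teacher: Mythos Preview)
Your proof is correct. The three-term convex decomposition is exactly the right device, the coefficient checks are sound, and the midpoint trick $2G(x)\le G(x+y)+G(x-y)$ combined with evenness cleanly handles the reverse inequality. In fact you obtain the slightly sharper bound with $\epsilon(k-1)\,G(C_\epsilon y)$ in place of $2\,G(C_\epsilon y)$, since $\epsilon(k-1)<(k-1)/k<1$.

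As for comparison with the paper: the paper does not supply its own proof of this lemma but simply quotes it from Br\'ezis--Lieb \cite[lem.~3]{BreLie83} (see also \cite[lem.~4.7]{KhaKoz15}). Your argument is essentially the classical proof found in those references, so there is nothing further to contrast.
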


Let us also recall proposition 2.4 from \cite{ChmMak19}, which  will be used in the proof of the fact that our action functional satisfies the Palais-Smale condition. 
\begin{proposition}
	\label{prop:adelkowyTrik}
	For any $1<p\leq q<\infty$, such that $|\cdot|^p\prec G(\cdot)\prec |\cdot|^q$,
	\[
	\begin{split}
	\int_{I}|u|^{p}\,dt\geq C\,\WLGnorm{u}^{p-q}\LGnorm{u}^{q}
	\end{split}
	\]
	for  $u\in\WLGspace\backslash\{0\}$ and some $C>0$.
\end{proposition}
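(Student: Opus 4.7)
The plan is to combine three ingredients: the Orlicz embedding $\Lpspace[q]\hookrightarrow\LGspace$ coming from $G\prec|\cdot|^q$, the pointwise interpolation $|u|^q=|u|^p\,|u|^{q-p}$, and the Sobolev-type embedding $\WLGspace\hookrightarrow\LspaceSymbol^\infty(I,\R^N)$, which is available because $I$ is compact and elements of $\WLGspace$ are absolutely continuous.

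First, from $G\prec|\cdot|^q$ I would extract a constant $C_1>0$ with $\LGnorm{u}\leq C_1\Lpnorm[q]{u}$ for every $u\in\Lpspace[q]$; raising both sides to the power $q$ yields
\[
\LGnorm{u}^q\leq C_1^q\int_I|u(t)|^q\,dt.
\]
Next, since $p\leq q$ the trivial pointwise bound $|u(t)|^q\leq\norm{u}_\infty^{q-p}|u(t)|^p$ integrates to
\[
\int_I|u(t)|^q\,dt\leq\norm{u}_\infty^{q-p}\int_I|u(t)|^p\,dt,
\]
and the embedding $\WLGspace\hookrightarrow\LspaceSymbol^\infty$ provides $C_2>0$ with $\norm{u}_\infty\leq C_2\WLGnorm{u}$. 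Concatenating these three estimates I obtain
\[
\LGnorm{u}^q\leq C_1^qC_2^{q-p}\,\WLGnorm{u}^{q-p}\int_I|u(t)|^p\,dt,
\]
which is exactly the desired inequality with $C=C_1^{-q}C_2^{p-q}$ (recall $p-q\leq 0$).

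The only real work is justifying the two embeddings in the anisotropic Orlicz setting. For $\Lpspace[q]\hookrightarrow\LGspace$ one translates the growth comparison $G(x)\leq c|x|^q$ at infinity (with a harmless bounded perturbation on bounded sets, since $I$ is compact) into the corresponding estimate on Luxemburg norms via the modular relations \eqref{eq:modular_norm_relation}. For $\WLGspace\hookrightarrow\LspaceSymbol^\infty$ one writes $u(t)=u(t_0)+\int_{t_0}^t\dot u\,ds$ for a mean-value point $t_0$ chosen as in the proof of lemma \ref{lem:RG>G}, bounds $|u(t_0)|$ by $\LGnorm{u}$ and the integral term by $\LGnorm{\dot u}$ using the continuous embedding $\LGspace\hookrightarrow\Lpspace[1]$ on the compact interval. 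This step is the main obstacle in the sense that it requires careful bookkeeping of constants, but no new ideas beyond those already used in section \ref{sec:preliminaries}; once the embeddings are in place the proposition follows by the chain of inequalities above.
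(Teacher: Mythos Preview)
Your proof is correct. The paper does not actually prove this proposition here---it is merely recalled from \cite{ChmMak19}---so there is no in-paper argument to compare against. Your route, chaining the embedding $\Lpspace[q]\hookrightarrow\LGspace$ (from $G\prec|\cdot|^q$), the pointwise interpolation $|u|^q\leq\norm{u}_\infty^{q-p}|u|^p$, and the Sobolev embedding $\WLGspace\hookrightarrow\Lpspace[\infty]$ (which the paper itself invokes elsewhere with the constant $C_{\infty,G}$), is standard and the algebra is correct.
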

\section{Existence of the first solution}
Define the action functional $\Jcal\colon \WLGTspace(I,\R^N)\to \R$ by
\begin{equation}
\label{eq:J}\tag{$\Jcal$}
\Jcal(u)=\int_I F(t,u,\dot{u}) + V(t,u) + \inner{f}{u}\,dt.
\end{equation}
Since $F$ and $V$ are of class $C^1$ and $F$ satisfies \ref{asm:F:growth}, $\Jcal$ is well defined and of class $C^1$. Furthermore, its derivative is given by
\begin{equation}
\label{eq:J'}\tag{$\Jcal'$}
\Jcal'(u)\varphi =\int_I \inner{F_v(t,u,\dot{u})}{\dot{\varphi}}\,dt+
\int_I \inner{F_x(t,u,\dot{u})+V_x (t,u)+f}{\varphi}\,dt
\end{equation}
See \cite[Theorem 5.7]{ChmMak17} for more details.

We can now formulate the first of our main results.
\begin{theorem}
	\label{thm:main1}
	Let $F$, $V$ and $f$  satisfies  \ref{asm:F:convex}-\ref{asm:F:zero}, \ref{V:K-W}-\ref{V:zero} and  
	\ref{f:IntG*f<}.
	Then \eqref{eq:AELT} has at least one nontrivial periodic solution.
\end{theorem}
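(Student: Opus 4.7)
The plan is to apply the general Mountain Pass Theorem \ref{thm:MPT_mw} to $\Jcal$ on $\WLGTspace$ with $e_0=0$ and the bounded open neighborhood $\Omega=\Phi^{-1}([0,\rho))$ introduced in \eqref{omega}. Four items must be verified: $\Jcal(0)=0$, the strict separation $\inf_{\partial\Omega}\Jcal>0$, the existence of an ``exterior'' point $e_1\notin\Omega$ with $\Jcal(e_1)\le 0$, and the Palais--Smale condition. The first is immediate from \ref{asm:F:zero} together with \ref{V:zero}.

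For the separation on $\partial\Omega$ I would exploit that every $u\in\partial\Omega$ satisfies $R_G(\dot u)+R_G(u)=\rho$, so Lemma \ref{lem:RG>G} gives $G\bigl(u(t)/(2\abs{I})\bigr)\le\rho/2$ pointwise and hypothesis \ref{V:>G} applies, yielding $V(t,u)\ge b\,G(u)-g(t)$. Combining with the ellipticity \ref{asm:F:ellipticity} and the Fenchel inequality $\inner{f}{u}\ge -G(u)-G^{\ast}(f)$ produces
\[
\Jcal(u)\ge \Lambda R_G(\dot u)+(b-1)R_G(u)-\int_I\bigl(G^{\ast}(f)+g\bigr)dt\ge \min\{\Lambda,b-1\}\rho-\int_I\bigl(G^{\ast}(f)+g\bigr)dt,
\]
which is strictly positive by \ref{f:IntG*f<}. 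For the exterior point I would take the constant function $e_1\equiv sx_0$ with $x_0\ne 0$ and $s$ large: then $\dot e_1\equiv 0$ forces $F\equiv 0$ by \ref{asm:F:zero}, while integrating the AR-type bound \ref{V:ARad} (the standard trick applied to $\frac{d}{ds}\bigl[(-V)(t,sx_0)/s^{\theta_V}\bigr]$, using \eqref{V:W>K>pk}) yields $-V(t,sx_0)\ge C\,s^{\theta_V}$ for $s$ large. Since $\theta_V>1$ this dominates the linear forcing term, so $\Jcal(e_1)\to -\infty$; moreover $\int_I G(sx_0)dt\to\infty$ forces $e_1\notin\Omega$ eventually.

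The main obstacle is verification of the Palais--Smale condition. Given $\{\Jcal(u_n)\}$ bounded and $\Jcal'(u_n)\to 0$ in $(\WLGTspace)^{\star}$, I would first bound $\WLGnorm{u_n}$ by inspecting the combination $\Jcal(u_n)-\theta_V^{-1}\Jcal'(u_n)u_n$: applying \ref{asm:F:AR}, \ref{V:ARad} (absorbing the contribution of the region $\{|u_n|\le M\}$ into a constant) and \ref{asm:F:ellipticity} gives
\[
\tfrac{\theta_V-\theta_F}{\theta_V}\Lambda\, R_G(\dot u_n)+\tfrac{\varepsilon_V}{\theta_V}\int_I|u_n|^{p_K}\,dt\le C\bigl(1+\WLGnorm{u_n}\bigr),
\]
after which the coerciveness \eqref{prop:RGcoercive} combined with Proposition \ref{prop:adelkowyTrik} (applied with $p=p_K$) forces $\{\WLGnorm{u_n}\}$ bounded. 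Along a subsequence $u_n\toweak u$ weakly in $\WLGTspace$ and uniformly on $I$ by compact embedding. Writing $\bigl(\Jcal'(u_n)-\Jcal'(u)\bigr)(u_n-u)\to 0$, the lower-order terms vanish via uniform convergence and \eqref{F:growth:Fx}, while convexity \ref{asm:F:convex} combined with the growth control \eqref{F:growth:Fv} on $F_v$ forces $\dot u_n\to\dot u$ in $\LGspace$ by a monotonicity argument as in \cite{ChmMak19,ChmMak20}, yielding strong convergence in $\WLGTspace$. Theorem \ref{thm:MPT_mw} then produces a critical point $u^{\star}$ at level $c>\max\{\Jcal(0),\Jcal(e_1)\}\ge 0$, automatically nontrivial and, by the standard integration by parts argument, a solution of \eqref{eq:AELT}.
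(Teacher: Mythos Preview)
Your proof is correct and follows the same architecture as the paper: apply Theorem \ref{thm:MPT_mw} with $\Omega=\Phi^{-1}([0,\rho))$, obtain the boundary separation via Lemma \ref{lem:RG>G}, \ref{asm:F:ellipticity}, \ref{V:>G} and \ref{f:IntG*f<}, and verify Palais--Smale by estimating $\theta_V\Jcal(u_n)-\Jcal'(u_n)u_n$ and then invoking \eqref{prop:RGcoercive}, Proposition \ref{prop:adelkowyTrik} and the convergence lemma from \cite{ChmMak20}.

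The one place you diverge is Step 4 (the exterior point). The paper takes $e_1=\lambda\psi$ for an arbitrary nontrivial $\psi\in\WLGTspace$ and must control the $F$-term via \ref{asm:F:AR}, showing $F(t,\lambda x,\lambda v)\le\lambda^{\theta_F}F(t,x,v)$ and then using $\theta_V>\theta_F$. Your choice of a constant $e_1\equiv s x_0$ with $|x_0|>M$ is cleaner: the derivative vanishes, so $F$ disappears entirely by \ref{asm:F:zero}, and only the $V$-scaling and $\theta_V>1$ (which follows from $p_K\le\theta_V-\varepsilon_V$ in \ref{V:ARad}) are needed. This simplification is legitimate for the periodic problem since constants lie in $\WLGTspace$; note, however, that the paper's more general choice of $\psi$ would survive in settings (e.g.\ Dirichlet) where constants are unavailable. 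A minor omission: you use $\partial\Omega\subset\Phi^{-1}(\{\rho\})$ without comment, but this is the easy inclusion (continuity of $\Phi$); the paper additionally proves the reverse inclusion in its Step 1, though only the direction you use is actually needed for the MPT.
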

\begin{proof} We show that $\Jcal$ satisfies assumptions of theorem \ref{thm:MPT_mw} for 
	$\Omega=\Phi^{-1}([0,\rho))$, where $\rho>0$, $\Phi:\WLGspace\to[0,\infty)$, $\Phi(u)=R_G(\dot u)+R_G(u)$.
	\begin{description} 
		\item[Step 1]
		One can see, that $\Omega$ is a bounded open neighborhood of $0$. We claim that $\partial\Omega=\Phi^{-1}(\{\rho\})$. Since $\Phi$ is continuous, \[\partial\Phi^{-1}([0,\rho))\subset\Phi^{-1}(\partial[0,\rho))=\Phi^{-1}(\{\rho\}).\] For the opposite inclusion (not true in general) suppose that $x\in\Phi^{-1}(\{\rho\})$, $x_n^1=\frac{n+1}{n}x$, $x_n^2=\frac{n}{n+1}x$ for $n\in\N.$ Then $x_n^1,x_n^2\to x$. From convexity of $\Phi$ and since $\rho>0$, we have \[\Phi(x_n^1)\geq\frac{n+1}{n}\Phi(x)>\Phi(x) \quad\text{ and }\quad \Phi(x_n^2)\leq\frac{n}{n+1}\Phi(x)<\Phi(x)\] for all $n\in\N$.  Hence $x\in\partial\Omega.$
		\item[Step 2] In this step we show that  $\Jcal$  satisfies the Palais-Smale condition.
		
		Fix $u\in\WLGTspace$, such that $\WLGnorm{u}\neq 0$.
		From \ref{asm:F:AR} we have 
		\begin{equation*}
		\label{PS:FV}
		\int_I\inner{F_v(t,u,\dot u)}{\dot u}+\inner{F_x(t,u,\dot u)}{ u}\,dt
		\leq \theta_F\int_I F(t,u,\dot u)\,dt.
		\end{equation*}
		
		Combining it with \ref{asm:F:ellipticity} we have:
		\begin{equation}\label{PS:F}
		\int_{I}\theta_V F(t,u,\dot u)-\inner{F_v(t,u,\dot u)}{\dot u}-\inner{F_x(t,u,\dot u)}{u}\,dt\geq \Lambda(\theta_V-\theta_F)\int_IG(\dot u)\,dt.
		\end{equation}

		Set
		\[ M_V=\sup\{| (\theta_V-\varepsilon_V)K(t,x)-\theta_VW(t,x))-\inner{V_x(t,x)}{x}| \colon t\in I, |x|\leq M \}.\]

		Then, by assumptions \ref{V:K-W} and \ref{V:ARad},
		\begin{multline}\label{PS:V}
		\theta_V\int_I V(t,u)-\inner{ V_x(t,u)}{u}\,dt
		=\varepsilon_V\int_IK(t,u)\,dt+\\
		+(\theta_V-\varepsilon_V)\int_IK(t,u)\,dt
		-\theta_V\int_I W(t,u)\,dt-\int_I\inner{V_x(t,u)}{u}\,dt \geq\\
		\geq\varepsilon_V\int_IK(t,u)\,dt-|I|M_V.
		\end{multline}
		
		Applying \eqref{V:W>K>pk} 
		and proposition \ref{prop:adelkowyTrik} we have that for some $C_1, C_2=C_2(|I|)>0$ 
		\begin{equation}\label{PS:K}
		\int_{I}K(t,u)\,dt\geq C_1\frac{\LGnorm{u}^{q}}{\WLGnorm{u}^{q-p_K}}-C_2
		\end{equation}
		for all $q$ such that $G\prec|\cdot|^{q}$.
		
		Since $f\in\Lpspace[G^{\ast}]$, by H\"older's inequality  we have
		\begin{equation}
		\label{PS:f_below}
		(\theta_V-1) \int_I \inner{f(t)}{u} \,dt
		\geq
		-2(\theta_V-1)\LGastnorm{f} \LGnorm{u}
		\geq
		-  C_f \WLGnorm{u},
		\end{equation}
		where $C_f=2 (\theta_V-1)\LGastnorm{f}>0$.

		Let $\{u_n\}\subset\WLGspace$ be a Palais-Smale sequence for functional $\Jcal$. Then there exist $C_J, C_{J'}>0$ such that
		
		\begin{equation}
		\label{eq:CJJ'}
		-C_J\leq\Jcal(u_n)\leq C_J,\quad -C_{J'}\WLGnorm{u_n}\leq\Jcal'(u_n)u_n\leq C_{J'}\WLGnorm{u_n}.
		\end{equation}
		
		Assume that $\{u_n\}$ is not bounded. Then there exists a subsequence of $\{u_n\}$ such that $\WLGnorm{u_n}\to\infty$.
		
		Combining \eqref{PS:F}-\eqref{eq:CJJ'}		
		we obtain
		\begin{multline*}
		\label{PS:ogr_beg}
		\theta_V C_J+C_{J'}\WLGnorm{u_n}\geq \theta_V\Jcal(u_n)-\Jcal'(u_n)u_n=\\=\int_{I}\theta_V F(t,u_n,\dot u_n)-\inner{F_v(t,u_n,\dot u_n)}{\dot u_n}-\inner{F_x(t,u_n,\dot u_n)}{u_n}\,dt+\\
		+\int_I\theta_V V(t,u_n)-\inner{ V_x(t,u_n)}{u_n}\,dt+\int_I(\theta_V-1)\inner{f(t)}{u_n}\,dt\geq\\
		\geq C_0\int_IG(\dot u_n)\,dt +C_1\frac{\LGnorm{u_n}^{q}}{\WLGnorm{u_n}^{q-p_K}}-C_2-  C_3 \WLGnorm{u_n}
		\end{multline*}
		for some $C_0,C_1,C_2,C_3>0$. Hence
		\[
		\WLGnorm{u_n}\left(\frac{R_G(\dot u_n)}{\WLGnorm{u_n}}+\frac{\LGnorm{u_n}^{q}}{\WLGnorm{u_n}^{q-p_K+1}}-\frac{C_4}{\WLGnorm{u_n}}-  C_5\right)\leq C_6\]
		for some $C_4,C_5,C_6>0$.
		
		In the proof of lemma 3.2 in \cite{ChmMak19} it was shown that the left side of the above inequality goes to the infinity, which is impossible. Hence $\{u_n\}$ is bounded.

		Repeating arguments used in the proof of lemma 4.2 from \cite{ChmMak20} one can show that there exists a converging subsequence i.e. the  Palais Smale condition is met.
		\item[Step 3]
		%
		Take $u\in \Phi^{-1}(\{\rho\})$.
		Then, by lemma \ref{lem:RG>G}, we have
		$G\left(\frac{u(t)}{2|I|}\right)\leq\rho/2$ for all $t\in I$.
		From \ref{asm:F:ellipticity}, \ref{V:>G} and Fenchel's inequality,
		\begin{multline*}
		\Jcal(u)\geq \int_I \Lambda G(\dot u)+bG(u)-G(u)-G^{\ast}(f)-g(t)\,dt\geq
		\\\geq \min\{\Lambda,b-1\}\rho-\int_I\left(G^{\ast}(f)+g(t)\right)\,dt.
		\end{multline*}
		Combining it with \ref{f:IntG*f<} we obtain $\Jcal(u)>0$ on $\Phi^{-1}(\{\rho\})$. 
		\item[Step 4]
		Now we show that there exists $e\in\Phi^{-1}((\rho,\infty))$ such that $\Jcal(e)<0$.
		By \ref{V:ARad}, for $|x|>M$, $t\in I$, $\lambda>1$ we obtain
		\begin{multline*}
		\log\left(\frac{-V(t,\lambda x)}{-V(t,x)}\right) =
		\int_{1}^{\lambda}\frac{d}{d\lambda}\log(-V(t,\lambda x))\,d\lambda=\int_1^{\lambda}\frac{-\inner{ V_x(t,\lambda x)}{\lambda x}}{-\lambda V(t,\lambda x)}\,d\lambda\geq\\
		\geq \int_1^{\lambda}\frac{-(\theta_V-\varepsilon_V) K(t,\lambda x)+\theta_V	W(t,\lambda x) }{-\lambda V(t,\lambda x)}\,d\lambda\geq \theta_V\int_1^{\lambda}\frac{1}{\lambda}=\log\left(\lambda^{\theta_V}\right).
		\end{multline*}
		
		Hence
		\[V(t,\lambda x)\leq \lambda^{\theta_V}V(t,x)\text{ for } |x|>M.\]
		In similar way,  from \ref{asm:F:AR}, we have:
		\[F(t,\lambda x,\lambda v)\leq \lambda^{\theta_F}F(t,x,v)\] for $x,v\in\R^N$, $t\in I$, $\lambda>1$.
		Let $\lambda>1$ and	 $\psi\in\WLGTspace$ be such that $\left|\{t\in I\colon~|\psi(t)|>0\}\right|>0$.
		Then we obtain
		\begin{multline*}
		\label{Je<03zmienne
		}
		\Jcal(\lambda\psi) 
		\leq \int_I\lambda^{\theta_F} F(t,\psi,\dot\psi)+\lambda\inner{f(t)}{\psi}\,dt+\lambda^{\theta_V}\int_{\{|\psi(t)|>M\}}V(t,\ \psi)\,dt+C_V|I|,
		\end{multline*}
		where 	
		$C_V=\sup\{V(t,x):\, |x|<M,t\in I\}$.
		Note that $V$ is negative for $|x|>M$ and  $\theta_V>\theta_F$. Therefore if we take $e=\lambda\psi$ for sufficiently large $\lambda$,  we get $\Jcal(e)<0$ and $\Phi(e)>\rho$.
		\item[Step 5]
		To finish the proof note that by \eqref{eq:J}, \ref{asm:F:zero} and \ref{V:zero} we have that $\Jcal(0)=0$. 
		Applying theorem \ref{thm:MPT_mw} to $\Jcal$, $e_0=0$ and $e_1=e$, we obtain that there exists a critical point $u_1$ with a critical value
		\begin{equation}
		\label{c1}\tag{$c_1$}
		c_1:=\inf_{g\in\Gamma}\max_{s\in[0,1]}\Jcal(g(s))>0,\end{equation}
		where
		\[
		\Gamma=\left\{g\in C\left([0,1],\WLGspace\right)\::\:g(0)=0,~~g(1)=e\right\}.
		\]
	\end{description}
\end{proof}

\section{Existence of the second solution}

Theorem \ref{thm:main1} ensures the existence of the first solution of \eqref{eq:AELT}.
To obtain the second nontrivial solution we use the well known Ekeland's Variational Principle.
\begin{theorem}[thm. 1.1, \cite{Eke74}]
	\label{thm:wze}
	Let $V$ be a complete metric space  and $J:V\to\R\cup\{+\infty\}$  a lower semi continuous function, bounded from below, $\not\equiv+\infty$.
	Let $u\in V$ and $\varepsilon>0$ be such that
	\begin{equation}
	\label{eq:ekel_zal}
	J(u)\leq \inf_{v\in V}J(v)+\varepsilon.
	\end{equation} 
	Then for all
	$\delta>0$
	there exists some point $v\in V$ such that
	\begin{itemize}
		\item[i)]	$J(v)\leq J(u),$
		\item[ii)]	$d(u,v)\leq\delta,$
		\item[iii)] $J(w)>J
		(v)-\frac{\varepsilon}{\delta} d(v,w)$ for all $w\neq v$.
	\end{itemize}
\end{theorem}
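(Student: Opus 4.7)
The plan is to use the classical proof of Ekeland's principle via iterated selection together with the completeness of $V$. I would begin by introducing a preorder $\preceq$ on $V$ defined by
\[
w \preceq z \iff J(w) + \tfrac{\varepsilon}{\delta}\, d(z,w) \leq J(z).
\]
Transitivity follows from the triangle inequality, and $d(z,w) > 0$ for $z \neq w$ makes $\preceq$ antisymmetric. The goal becomes the construction of a $\preceq$-minimal element $v$ lying below the distinguished starting point $u$.

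Starting from $u_0 := u$, I would define inductively a sequence $u_{n+1} \in S_n := \{w \in V : w \preceq u_n\}$ satisfying
\[
J(u_{n+1}) \leq \inf_{S_n} J + 2^{-n-1}.
\]
The set $S_n$ is nonempty (it contains $u_n$) and $J$ is bounded below on $V$, so the selection is legitimate. From $u_{n+1} \preceq u_n$ one extracts $\frac{\varepsilon}{\delta}\, d(u_n,u_{n+1}) \leq J(u_n) - J(u_{n+1})$. Since $\{J(u_n)\}$ is nonincreasing and bounded below, the telescoping sum $\sum_{n} d(u_n, u_{n+1})$ is finite, so $\{u_n\}$ is Cauchy; completeness of $V$ produces a limit $v \in V$.

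The main obstacle, as I see it, is transferring the preorder relation across the limit. For fixed $m$ and every $n \geq m$, the inequality $u_n \preceq u_m$ reads
\[
J(u_n) + \tfrac{\varepsilon}{\delta}\, d(u_m, u_n) \leq J(u_m),
\]
and the lower semicontinuity of $J$ together with continuity of $d(u_m,\cdot)$ allows me to pass to $\liminf_{n\to\infty}$ and conclude $v \preceq u_m$ for every $m$. Specializing to $m=0$ delivers conclusion i), and combining $J(v) \geq \inf_V J \geq J(u)-\varepsilon$ with $\frac{\varepsilon}{\delta}\, d(u,v) \leq J(u)-J(v)$ delivers ii).

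For the strict inequality iii), I would argue by contradiction: if some $w \neq v$ satisfied $w \preceq v$, then by transitivity $w \preceq u_n$ for every $n$, placing $w \in S_n$ and giving $J(u_{n+1}) \leq J(w) + 2^{-n-1}$. Passing to the limit yields $J(v) \leq J(w)$, which combined with $w \preceq v$ forces $\frac{\varepsilon}{\delta}\, d(v,w) \leq 0$, contradicting $w \neq v$. Hence $w \not\preceq v$ for every $w \neq v$, which is precisely the desired strict inequality $J(w) > J(v) - \tfrac{\varepsilon}{\delta}\, d(v,w)$.
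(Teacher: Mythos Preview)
Your argument is a correct and complete rendition of the classical proof of Ekeland's variational principle via the preorder $w\preceq z\iff J(w)+\tfrac{\varepsilon}{\delta}d(z,w)\le J(z)$ and an iteratively refined minimizing sequence. The only implicit point worth making explicit is that $J(u)<+\infty$ (which follows from $J\not\equiv+\infty$, boundedness below, and the hypothesis $J(u)\le\inf_V J+\varepsilon$), so that the sets $S_n$ consist of points where $J$ is finite and the recursion is well posed; everything else is in order.

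As for comparison with the paper: there is nothing to compare. Theorem~\ref{thm:wze} is quoted from \cite{Eke74} without proof and used as a black box in the proofs of Theorems~\ref{thm:main3} and~\ref{thm:main2}. The paper neither reproduces nor sketches Ekeland's argument, so your proposal supplies something the paper deliberately omits rather than paralleling or diverging from a proof given there.
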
 
Set
\begin{equation}
\label{c2}
\tag{$c_2$}
c_2:=\inf_{u\in\overline\Omega}\Jcal(u)
\end{equation}
Let us recall, that
$\Omega=\Phi^{-1}([0,\rho)).$
Firstly we consider the case with forcing.
\begin{theorem}
	\label{thm:main3}
	Let $F$ and $V$  satisfies  \ref{asm:F:convex}-\ref{asm:F:zero}, \ref{V:K-W}-\ref{V:zero} and  $f(t)\not\equiv 0$.
	Then \eqref{eq:AELT} has at least two  periodic solutions.
\end{theorem}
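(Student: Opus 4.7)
The plan is to obtain the second solution as a minimizer of $\Jcal$ on $\overline\Omega$, where $\Omega=\Phi^{-1}([0,\rho))$ is the same bounded open neighborhood of $0$ used in Theorem \ref{thm:main1}. Two observations make the minimization feasible. First, $0\in\overline\Omega$ and $\Jcal(0)=0$ by \ref{asm:F:zero} and \ref{V:zero}, so the infimum
\begin{equation*}
c_2 = \inf_{u\in\overline\Omega}\Jcal(u)
\end{equation*}
satisfies $c_2\leq 0$ automatically. Second, Step 3 of Theorem \ref{thm:main1} gives the uniform lower bound
\begin{equation*}
\Jcal(u)\geq\eta := \min\{\Lambda,b-1\}\rho-\int_I\bigl(G^{\ast}(f)+g(t)\bigr)\,dt>0 \qquad\text{for all } u\in\partial\Omega,
\end{equation*}
with strict positivity guaranteed by \ref{f:IntG*f<}. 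The gap $c_2\leq 0<\eta$ is what will allow us to promote a constrained near-minimizer to a genuine Palais-Smale sequence for the unconstrained functional.

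Concretely, I would apply Ekeland's principle (Theorem \ref{thm:wze}) to $\Jcal$ on the complete metric space $\overline\Omega$, equipped with the metric induced by $\WLGnorm{\cdot}$. For each $n\in\N$, pick $u_n\in\overline\Omega$ with $\Jcal(u_n)\leq c_2+1/n^2$ and run Ekeland with $\varepsilon=1/n^2$, $\delta=1/n$; this produces $v_n\in\overline\Omega$ with $\Jcal(v_n)\leq c_2+1/n^2$ and
\begin{equation*}
\Jcal(w)>\Jcal(v_n)-\tfrac{1}{n}\WLGnorm{w-v_n}\qquad\text{for all } w\in\overline\Omega\setminus\{v_n\}.
\end{equation*}
Since $\Jcal(v_n)\to c_2\leq 0<\eta$, for every sufficiently large $n$ one has $v_n\notin\partial\Omega$, i.e.\ $v_n$ lies in the interior of $\Omega$. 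For any $\varphi\in\WLGTspace$ and all sufficiently small $t>0$ we then have $v_n+t\varphi\in\Omega$; substituting $w=v_n+t\varphi$ into the Ekeland inequality, dividing by $t$, and letting $t\to 0^+$ yields $\Jcal'(v_n)\varphi\geq-\tfrac{1}{n}\WLGnorm{\varphi}$, and repeating with $-\varphi$ shows $\WLGastnorm{\Jcal'(v_n)}\leq 1/n\to 0$.

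Thus $\{v_n\}$ is a Palais-Smale sequence for $\Jcal$, so the Palais-Smale condition verified in Step 2 of Theorem \ref{thm:main1} delivers a subsequence converging in $\WLGTspace$ to some $u_2\in\overline\Omega$ with $\Jcal(u_2)=c_2$ and $\Jcal'(u_2)=0$; hence $u_2$ solves \eqref{eq:AELT}. Since $\Jcal(u_1)=c_1>0\geq c_2=\Jcal(u_2)$, the two critical points $u_1$ and $u_2$ are distinct. The main obstacle throughout is keeping the Ekeland sequence away from $\partial\Omega$, so that the $\pm t\varphi$ perturbations remain admissible and the third clause of Theorem \ref{thm:wze} can be turned into a differential estimate; this is precisely what the gap $c_2<\eta$, secured by \ref{f:IntG*f<}, delivers, and it is the role that the modular sublevel set $\Omega$ plays more gracefully in the anisotropic setting than a Luxemburg ball would.
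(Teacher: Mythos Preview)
Your proof is correct and follows the same overall strategy as the paper: apply Ekeland's principle to $\Jcal$ on $\overline\Omega$, use the strict positivity of $\Jcal$ on $\partial\Omega$ together with $c_2\le 0$ to force the Ekeland points into the interior, deduce $\WLGastnorm{\Jcal'(v_n)}\to 0$, and invoke the Palais--Smale condition from Theorem \ref{thm:main1}.

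Two small differences are worth noting. First, to justify that $v_n+t\varphi\in\Omega$ for small $t$, you simply use that $\Omega$ is open (established in Step~1 of Theorem~\ref{thm:main1}); the paper instead appeals to the Brezis--Lieb inequality (Lemma~\ref{lem:BreLie}) to control $\Phi(v+sh)$ explicitly, which is more hands-on but not strictly necessary once openness is known. Second, you conclude $u_1\neq u_2$ from $c_1>0\ge c_2$, whereas the paper uses the hypothesis $f\not\equiv 0$ to argue that $u\equiv 0$ is not a solution and hence $u_2$ is nontrivial. Your route is cleaner for the stated conclusion (two solutions) and in fact does not use $f\not\equiv 0$ at all; the paper's route additionally yields $u_2\neq 0$.
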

\begin{proof}
	Note that $\overline\Omega$ is a complete metric space  with respect to the norm in $\WLGspace$ and $\Jcal$ is bounded from below on $\overline{\Omega}$. Fix $\varepsilon>0$ and choose $\delta=\sqrt{\varepsilon}$.
	There exists $u\in\overline{\Omega}$ such that
	$\Jcal(u)\leq c_2
	+\varepsilon.$
	By theorem \ref{thm:wze}, there exists $v\in\overline{\Omega}$ such that 
	\begin{equation}
	\label{eq:WZA1zas}
	c_2\leq\Jcal(v)\leq c_2+\varepsilon,
	\end{equation}
	\begin{equation}
	\label{eq:WZAzas}
	\WLGnorm{v-u}\leq\sqrt{\varepsilon},
	\end{equation}
	\begin{equation}
	\label{eq:WZA2zas}
	\Jcal(w)\geq \Jcal(v)-\sqrt{\varepsilon}\WLGnorm{w-v} \text{ for all }w\neq v.
	\end{equation} 
	
	Now we show that $v\in\Omega$.  Since $\Jcal(0)=0$, $c_2\leq 0$. Hence and by \eqref{eq:WZA1zas} we have that
	\[
	0\geq c_2\geq \Jcal(v)-\varepsilon
	\]
	If we assume that $v\in\partial\Omega$, then $\Jcal(v)>0$, by Step 2 in the proof of theorem \ref{thm:main1}.
	Taking sufficiently small $\varepsilon$, we deduce that $0\geq c_2\geq \Jcal(v)-\varepsilon>0$, which is a contradiction.
	
	Take $w=v+sh$ with $0<s\leq 1$, $h\in\WLGspace$ such that $\WLGnorm{h}=1$. Then, by lemma \ref{lem:BreLie} we have that 
	\[
	\int_IG(v+sh)\,dt\leq \int_IG(v)+\sqrt{s}|G(2v)-2G(v)|+2G(\sqrt{s}h)\,dt
	\]
	and
	\[
	\int_IG(\dot v+s\dot h)\,dt\leq \int_IG(\dot v)+\sqrt{s}|G(2\dot v)-2G(\dot v)|+2G(\sqrt{s}\dot h)\,dt.
	\]
	Hence
	\begin{multline*}
	\Phi(v+sh)\leq\int_{I} G(v)+G(\dot v)+\sqrt{s}\left(|G(2v)-2G(v)|+|G(2\dot v)-2G(\dot v)|\right)\,dt+\\+2\int_IG(\sqrt{s}h)+G(\sqrt{s}\dot h)\,dt.
	\end{multline*}
 Note that $\LGnorm{\sqrt{s}h}\leq1$. From \eqref{eq:modular_norm_relation} we obtain $\int_IG(\sqrt{s}h)\leq  \LGnorm{\sqrt{s}h}\leq\sqrt{s}$. Hence
 \[
\Phi(v+sh)\leq\int_{I} G(v)+G(\dot v)+\sqrt{s}\left(|G(2v)-2G(v)|+|G(2\dot v)-2G(\dot v)|\right)\,dt+4\sqrt{s}.\]
	Since  $\Phi(v)<\rho$, it follows that for $s$ sufficiently small
 $\Phi(v+sh)<\rho$.
	By \eqref{eq:WZA2zas} 
	\[
	\Jcal(v+sh)\geq\Jcal(v)-\sqrt{\varepsilon}\WLGnorm{sh}.
	\]
	Hence
	\[
	\frac{\Jcal(v+sh)-\Jcal(v)}{s}\geq -\sqrt{\varepsilon}.
	\]
	Taking the limit as $s\to 0$, we have 
	$
	\inner{\Jcal'(v)}{h}\geq -\sqrt{\varepsilon}
	$
	for  $h\in\WLGspace$ such that $\WLGnorm{h}=1$.
	Since $-h\in\Omega$, we have
	$
	\sup_{\norm{h}=1}|\inner{\Jcal'(v)}{h}|\leq \sqrt{\varepsilon}$
	and hence
	\begin{equation}
	\label{eq:J'<eps}
	\WLGastnorm{\Jcal'(v)}\leq\sqrt{\varepsilon}.
	\end{equation}
	
	Let $\{u_n\}$ be a minimizing sequence of $\Jcal$. 
	We choose $\varepsilon_n$ in the following way:
	\[\varepsilon_n=
	\begin{cases}\Jcal(u_n)-\inf_{\overline{\Omega}}\Jcal,&\Jcal(u_n)>\inf_{\overline{\Omega}}\Jcal\\ \frac1n,&\Jcal(u_n)=\inf_{\overline{\Omega}}\Jcal\end{cases}
	\]
	One can see, that $\varepsilon_n\to0$ as $n\to\infty$. 
	Since $u_n$ satisfies \eqref{eq:ekel_zal} for each $n$, we have
	\begin{equation}
	\label{eq:WZA1}
	c_2\leq\Jcal(v_n)\leq c_2+\varepsilon_n,
	\end{equation}
	\begin{equation}
	\WLGnorm{v_n-u_n}\leq \sqrt{\varepsilon_n},
	\end{equation}
	\begin{equation}
	\WLGastnorm{\Jcal'(v_n)}\leq\sqrt{\varepsilon_n}.
	\end{equation} 
	for $v_n$ associated to $u_n$ and $\varepsilon_n$ 
	in \eqref{eq:WZA1zas}-\eqref{eq:WZA2zas}. Hence we can see that $\{v_n\}$ is a Palais-Smale sequence of $\Jcal$.
	Since $\Jcal$ satisfies the Palais-Smale condition (step 2 in the proof of theorem \ref{thm:main1}), we have that there is $u_2$ such that $\Jcal(u_2)=c_2$ and $\Jcal'(u_2)=0$. 	By \ref{asm:F:zero} and $f(t)\not\equiv 0$ we have that $u_0
	\equiv 0$ is not a solution of \eqref{eq:AELT}, so $u_2$  is a nontrivial solution of \eqref{eq:AELT}.

\end{proof}
\begin{remark}
	Similar arguments regarding the existence of a second solution can  be found in \cite[the proof of lem. 3.3]{Jan12} and \cite[step 4 in the proof of thm. 1.1]{YuaZha16}. See also \cite[thm. 4.2, cor. 4.1]{MawWil89}, \cite[thm. 3.1]{AmbHaj18} for more detailed computations.
\end{remark}

If $f(t)\equiv 0$ it is necessary to show that $\inf_{u\in\overline\Omega}\Jcal(u)<0$.
Without this assumption it is possible that the minimizing sequence
found in the proof of theorem \ref{thm:main3} 
converges to the  solution $u_0\equiv0$. In order to avoid this phenomenon
we add assumptions \ref{asm:F:nabla2type} and \ref{V:nabla2type}. 
\begin{theorem}
	\label{thm:main2}
	Let $F$ and $V$  satisfies  \ref{asm:F:convex}-\ref{asm:F:nabla2type}, \ref{V:K-W}-\ref{V:nabla2type} and  $f(t)\equiv 0$.
	Then \eqref{eq:AELT} has at least two nontrivial periodic solutions.
\end{theorem}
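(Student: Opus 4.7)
The plan is to combine Theorem \ref{thm:main1} (for the first solution) with the Ekeland-based argument of Theorem \ref{thm:main3} (for a second), the only new ingredient being the strict inequality $c_2<0$. Since $f\equiv 0$ makes $u_0\equiv 0$ itself a critical point with $\Jcal(0)=0$, the weak inequality $c_2\leq 0$ used in Theorem \ref{thm:main3} is not enough to distinguish the Ekeland minimizer from $0$; the purpose of \ref{asm:F:nabla2type} and \ref{V:nabla2type} is precisely to upgrade it. The first nontrivial critical point $u_1$, with $\Jcal(u_1)=c_1>0$, is supplied by Theorem \ref{thm:main1} applied with $f\equiv 0$ (assumption \ref{f:IntG*f<} then reducing to $\int_I g(t)\,dt<\min\{\Lambda,b-1\}\rho$, implicit in the choice of $\rho$ in \ref{V:>G}).

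To prove $c_2<0$, I would test against a constant function $\psi\equiv c$ for a nonzero $c\in\R^N$ with $G(c/(2|I|))\leq \rho/2$. Since $\dot\psi\equiv 0$, assumption \ref{asm:F:zero} gives $F(t,\psi,\dot\psi)\equiv 0$, so for $\lambda\in(0,\lambda_0)$ hypothesis \ref{V:nabla2type} yields
\begin{equation*}
\Jcal(\lambda\psi)=\int_I V(t,\lambda c)\,dt\leq \lambda^{\zeta_K}\int_I K(t,c)\,dt-\lambda^{\zeta_W}\int_I W(t,c)\,dt.
\end{equation*}
Because \ref{V:nabla2type} also asserts $W(t,c)>0$ on exactly the admissibility region, the last integral is strictly positive; since $\zeta_W<\zeta_K$, the negative term dominates for $\lambda$ small enough, giving $\Jcal(\lambda\psi)<0$. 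Continuity of $\Phi$ at $0$ with $\Phi(0)=0$ ensures $\lambda\psi\in\Omega$ for small $\lambda$, hence $c_2<0$.

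With $c_2<0$ in hand, the Ekeland construction from the proof of Theorem \ref{thm:main3} runs essentially verbatim: it produces a sequence $\{v_n\}\subset\overline{\Omega}$ with $\Jcal(v_n)\to c_2<0$ and $\WLGastnorm{\Jcal'(v_n)}\to 0$. The bound $\Jcal\rvert_{\partial\Omega}>0$ established in Step 3 of the proof of Theorem \ref{thm:main1} (still valid because \ref{f:IntG*f<} holds when $f\equiv 0$) forces $v_n$ into the interior of $\Omega$ for large $n$, legitimizing the local variational inequalities used in Theorem \ref{thm:main3}. The Palais-Smale condition (Step 2 of the proof of Theorem \ref{thm:main1}) then extracts a subsequence convergent to some $u_2$ with $\Jcal(u_2)=c_2<0$ and $\Jcal'(u_2)=0$. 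Since $\Jcal(0)=0>c_2$ and $\Jcal(u_1)=c_1>0>c_2$, the points $0$, $u_1$, $u_2$ are mutually distinct, giving two nontrivial solutions.

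The genuine obstacle is the strict negativity $c_2<0$: one must find a test direction on which the $-\lambda^{\zeta_W}W$ contribution beats every other term for small $\lambda$, while staying inside the domain where \ref{asm:F:nabla2type} and \ref{V:nabla2type} apply. A small constant vector $\psi\equiv c$ handles this for free, because the $F$-contribution vanishes by \ref{asm:F:zero}, the admissibility $G(c/(2|I|))\leq\rho/2$ is merely a smallness condition on $|c|$, and \ref{V:nabla2type} supplies $W(t,c)>0$ on exactly that region; a nonconstant test function would further invoke the domination $\zeta_W<\zeta_F$, which is also built into \ref{V:nabla2type}.
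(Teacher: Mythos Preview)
Your argument is correct and follows the paper's overall architecture (Theorem~\ref{thm:main1} for $u_1$, then Ekeland as in Theorem~\ref{thm:main3} for $u_2$, the only new point being $c_2<0$). The one place you genuinely diverge is in how you obtain $c_2<0$: the paper tests against an arbitrary nonzero $\psi\in\Omega$, invokes Lemma~\ref{lem:RG>G} to place $\psi(t)$ in the region where \ref{asm:F:nabla2type} and \ref{V:nabla2type} apply, and then bounds
\[
\Jcal(\lambda\psi)\leq \lambda^{\zeta_F}\!\int_I F(t,\psi,\dot\psi)\,dt+\lambda^{\zeta_K}\!\int_I K(t,\psi)\,dt-\lambda^{\zeta_W}\!\int_I W(t,\psi)\,dt
\leq |I|\bigl(C_1\lambda^{\min\{\zeta_F,\zeta_K\}}-C_2\lambda^{\zeta_W}\bigr),
\]
with $C_1=\max_{u\in\Omega}\bigl(F(t,u,\dot u)+K(t,u)\bigr)$ and $C_2=\min_{u\in\Omega}W(t,u)$, using the full strength of $\zeta_W<\min\{\zeta_F,\zeta_K\}$. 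Your choice of a constant test function $\psi\equiv c$ kills the $F$-term outright via \ref{asm:F:zero}, so you need only $\zeta_W<\zeta_K$ and the positivity of $W$ from \ref{V:nabla2type}; \ref{asm:F:nabla2type} and the inequality $\zeta_W<\zeta_F$ play no role in your argument for $c_2<0$. This is a legitimate and slightly more economical route, at the cost of not illustrating why the hypothesis \ref{asm:F:nabla2type} was introduced. Your remark that the reduced form of \ref{f:IntG*f<} (namely $\int_I g\,dt<\min\{\Lambda,b-1\}\rho$) is needed for Step~3 of Theorem~\ref{thm:main1} to go through when $f\equiv 0$ is a fair observation; the paper leaves this tacit as well.
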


\begin{proof}
	Let  $0\neq\psi\in\Omega$.  
	Then, by lemma \ref{lem:RG>G},  $G\left(\frac{\psi(t)}{2|I|}\right)\leq\rho/2$ for all $t\in I$. Choose \[\lambda<\min\left\{1,\left(\frac{C_2}{C_1}\right)^{1/(\min\{\zeta_F,\zeta_K\}-\zeta_W)}\right\}.\]
	By \ref{asm:F:nabla2type},  \ref{V:nabla2type} and $f(t)\equiv0$ we have
	\begin{multline*}
	\Jcal(\lambda\psi)= \int_IF(t,\lambda\psi,\lambda\dot\psi)+K(t,\lambda\psi)-W(t,\lambda\psi)\,dt\leq\\
	\leq\int_I\lambda^{\zeta_F}F(t,\psi,\dot\psi)+\lambda^{\zeta_K}K(t,\psi)-\lambda^{\zeta_W}W(t,\psi)\,dt\leq\\\leq \int_IC_1\lambda^{\min\{\zeta_F,\zeta_K\}}-C_2\lambda^{\zeta_W}\,dt<0,
	\end{multline*}
	where $C_1=\max_{u\in\Omega}F(t,u,\dot u)+K(t,u)$, $C_2=\min_{u\in\Omega}W(t,u)$.
	
	Hence $c_2<0$. In the same way as in the proof of theorem \ref{thm:main3} we show that there exists a minimizing sequence $\{v_n\}$ such that $\Jcal(v_n)\to 0$ and $\Jcal'(v_n)\to c_2$ as $n\to\infty$.  Hence there is $u_2$ such that $\Jcal(u_2)=c_2$ and $\Jcal'(u_2)=0$ and $u_2$ is the second solution of \eqref{eq:AELT}. Since $c_2<0$ and $\Jcal(0)=0$,  $u_2$ is nontrivial. 
	
\end{proof}

\section{Example }
\label{sec:example}
We finish the paper with some example of function $F$, potential and forcing satisfying assumptions \ref{V:K-W}-\ref{V:nabla2type}, \ref{asm:F:convex}-\ref{asm:F:nabla2type} and \ref{f:IntG*f<}. We also show that they do not satisfy assumptions from \cite{ChmMak19}, which indicate that taking $\Phi^{-1}([0,\rho))$ instead of the ball $\{u\in\WLGspace\colon \WLGnorm{u}<r\}$ is better to obtain the mountain pass geometry in the anisotropic case.

Let us recall that in \cite{ChmMak19} 
condition \eqref{asm:mpt_ar_J>0}, mentioned in the introduction, was
guaranteed by the following assumptions:

\begin{enumerate}[label=($A_3$)]
	\item\label{V>Gr0} 	there exist $r_0$, $b>1$ and $a\in\LGspace[1](I,\R)$, such that $V(t,x)\geq b\, G(x)-a(t) $ for 
	$|x|\leq r_0$,
\end{enumerate}

and either
\begin{equation}
\label{asm:f_equinorm}
\tag{$f_1$}
\quad
\int_{-T}^{T}G^{\ast}(f(t))+a(t)\,dt
<
\min\{1,b-1\}(r_0/(2C_{\infty,G}))
\end{equation}
or 
\begin{equation}
\label{asm:f_simonenko}
\tag{$f_2$}
\int_{-T}^{T}G^{\ast}(f(t)) +  a(t)\,dt
<\min\{1,b-1\}\begin{cases}	(r_0/(2C_{\infty,G}))^{q_G},&r_0\leq 4C_{\infty,G}\\(r_0/(2C_{\infty,G}))^{p_G}&r_0> 4C_{\infty,G}\end{cases},
\end{equation} 
where $C_{\infty,G}$ is an embedding  constant for $\WLGspace\embed \Lpspace[\infty]$  given by formula  $C_{\infty,G}= \max\{1,|I|\} A_G^{-1}\left(\frac{1}{|I|}\right)$ and $A_G\colon [0,\infty)\to [0,\infty)$ is the greatest convex minorant of $G$ (see \cite{AciMaz17}), $r=r_0/C_{\infty, G}.$ 


We show  that sets of assumptions
\begin{enumerate}[label=(\roman*)]
	\item \ref{V>Gr0}, (\ref{asm:f_equinorm} or \ref{asm:f_simonenko}),
	\item \ref{V:>G}, \ref{f:IntG*f<}
\end{enumerate}  
are independent, namely for some potentials it is not possible to find $r_0$ such that the first assumptions are satisfied, but 
for the same potential one can find $\rho$ such that the latter are met.

\begin{example}\label{ex:first}
	Let $I=[-1,1],$ $x=(x_1,x_2), v=(v_1,v_2)\in\R^N\times\R^N$
	\begin{align*}
	&F(t,x,v)=G(v)=v_1^2+(v_1-v_2)^2,\\
	&K(t,x)=2G(x)+|x|^2\log(|x|^2+1),\\
	&W(t,x)=\left(G(x)\right)^2+\frac{|x|^{\frac32}+|x|^5}{100},\\
	&V(t,x)=K(t,x)-W(t,x),\\
	&f_0(t)=\frac{2-t^2}{2500},\, f=(f_0,...,f_0),
	\end{align*}
	\begin{align*}
	&g(t)\equiv 0.001,\quad b=2, \quad \rho=0.004, \\
	&\theta_V=4.9,\quad\varepsilon_V=0.001,\quad\theta_{F}=4,\\
	&\zeta_W=\frac{31}{16},\quad\zeta_K=2,\quad\zeta_{F}=2.
	\end{align*}

	If we take $F$, $V$ and $f$ given above, then there do not exist $a$, $b$, $r_0$ such that either  \ref{V>Gr0} and \eqref{asm:f_equinorm} or \ref{V>Gr0} and  \eqref{asm:f_simonenko} holds.

	\begin{figure}[h]
		\centering
		\includegraphics[width=0.5\linewidth]{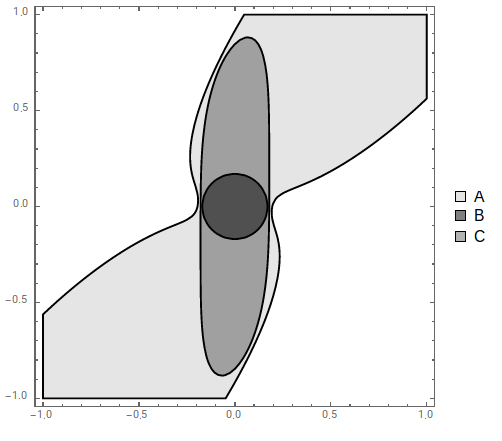}
		\caption{ The shape of the area  $A=\left\{x\in\R^N\colon V(x)\geq 
			bG(x)-g\right\}$ is such that for a ball $B=B_{r_0}(0)$ with  $r_0$ such that $B\subset A$ neither \eqref{asm:f_equinorm} nor \eqref{asm:f_simonenko} is satisfied.
			For area $C=\left\{x\in \R^N\colon G\left(\frac{x}{2|I|}\right)\leq\rho/2\right\}\subset A$ condition \ref{f:IntG*f<} is satisfied.}
		\label{fig:zalozenia}
	\end{figure}
	This situation was shown in Figure \ref{fig:zalozenia}.
	The shape of the area
	\[A=\left\{x\in\R^N\colon V(x)\geq 
	bG(x)-g\right\}\] is such that for a ball \[B=\{x\in\R^N\colon |x|\leq r_0\}\] with sufficiently small radius (such that $B
	\subset A$) neither \eqref{asm:f_equinorm} nor \eqref{asm:f_simonenko} is satisfied.
	\begin{figure}[!htb]
		\begin{minipage}{0.48\textwidth}
			\centering
			\includegraphics[width=.9\linewidth]{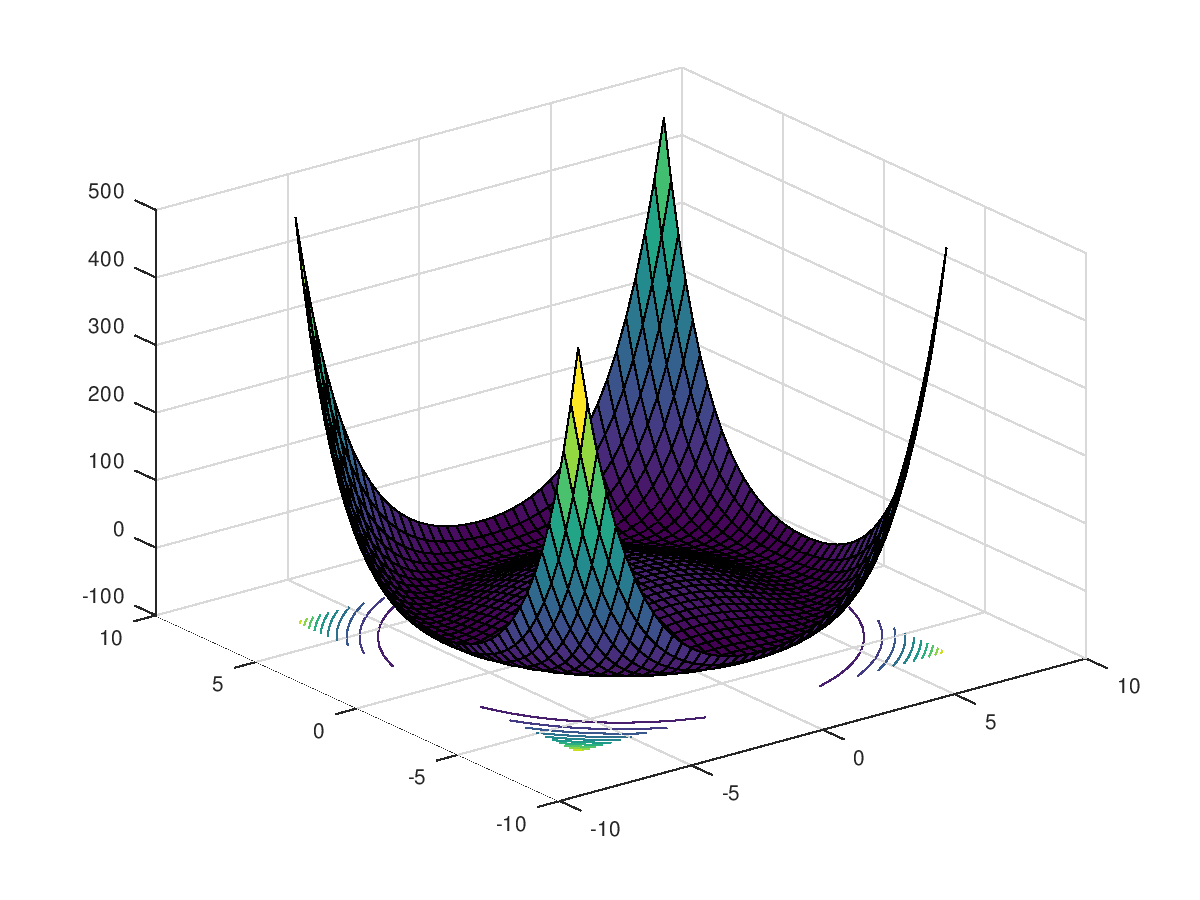}
			\caption{Plot of the function $h_1$}\label{Fig:plot1}
		\end{minipage}\hfill
		\begin{minipage}{0.48\textwidth}
			\centering
			\includegraphics[width=.9\linewidth]{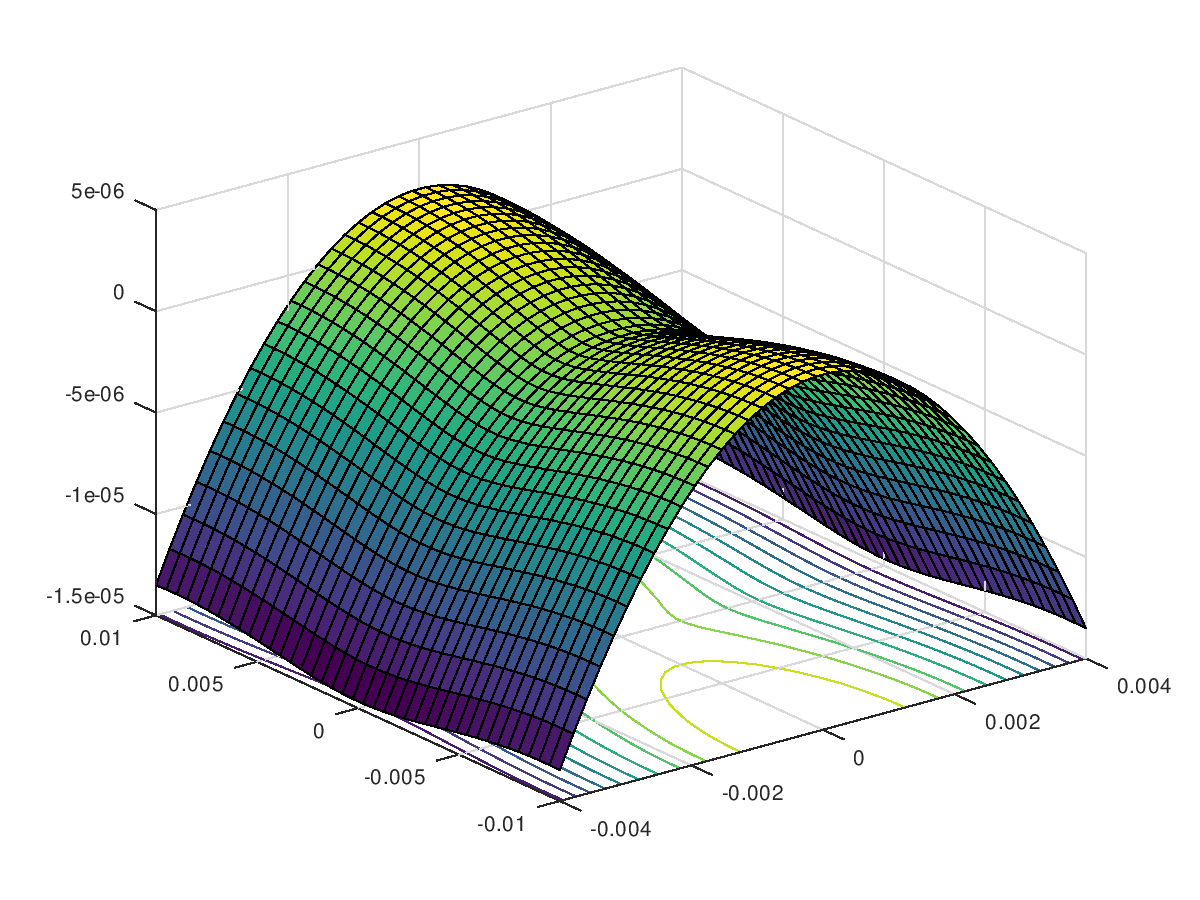}
			\caption{Plot of the function $h_2$}\label{Fig:plot2}
		\end{minipage}
	\end{figure}
	
	\begin{itemize}
		\item  If we assume that \eqref{asm:f_equinorm} is met, then we  can check, that for each $r_0>4C_{G,\infty}$ there exists $x\in B_{
			r_0}(0)$ such that $V(x)<bG(x)-a$ for all $b>1$, $a<\frac{r_0}{2C_{\infty,G}}$, which contradicts assumption $(A_3)$. In fact, if we take $x\in\partial B_{
			r_0}(0)$, then  it suffices to show that function $h_1:\R^N\to\R$,  $h_{1}(x)= G(x)-V(x)-\frac{|x|}{2C_{G,\infty}} $ can have  positive values for any $r_0>4C_{G,\infty}$ (see  Figure \ref{Fig:plot1}).
		\item If we assume that \eqref{asm:f_simonenko} is met, then we can check that for each $r_0>0$ there exists $x\in B_{
			r_0}(0)$ such that $V(x)<bG(x)-a$ for all $b>1$, $a<\min\left\{(\frac{r_0}{2C_{\infty,G}})^2, 
		(\frac{r_0}{2C_{\infty,G}})^4\right\}$ .
		So it suffices to show that function $h_2:\R^N\to \R$,  $h_{2}(x)= G(x)-V(x)-\left(\frac{|x|}{2C_{G,\infty}}\right)^2 $ can have  positive values for any $r_0$ (see Figure \ref{Fig:plot2}).
		
	\end{itemize}

	If, instead of the ball, we take "more anisotropic" area  \[C=\left\{x\in \R^N\colon G\left(\frac{x}{2|I|}\right)\leq\rho/2\right\}\subset A,\] connected with condition \ref{V:>G}, then  \ref{f:IntG*f<} is satisfied.
	%
	%


\end{example}

\begin{remark}
	In the above example we can also take 
	a more complicated $F$, e.g.\[
	F(t,x,v)=G(v)(2+|x|^{9/2}-\sin{t}).	\]
\end{remark}

\bibliographystyle{elsarticle-num}
\bibliography{multiplicity}

\begin{thebibliography}{10}
\expandafter\ifx\csname url\endcsname\relax
  \def\url#1{\texttt{#1}}\fi
\expandafter\ifx\csname urlprefix\endcsname\relax\def\urlprefix{URL }\fi
\expandafter\ifx\csname href\endcsname\relax
  \def\href#1#2{#2} \def\path#1{#1}\fi

\bibitem{Dao16}
A.~Daouas, Existence of homoclinic orbits for unbounded time-dependent
  {$p$}-{L}aplacian systems, Electron. J. Qual. Theory Differ. Equ. (2016)
  Paper No. 88, 12.

\bibitem{Ter12}
S.~Tersian, On symmetric positive homoclinic solutions of semilinear
  p-laplacian differential equations, Boundary Value Problems 2012~(1) (2012)
  121.

\bibitem{ZhaYua17}
Z.~Zhang, R.~Yuan, Homoclinic solutions for {$p$}-{L}aplacian {H}amiltonian
  systems with combined nonlinearities, Qual. Theory Dyn. Syst. 16~(3) (2017)
  761--774.

\bibitem{AciMaz17}
S.~Acinas, F.~Mazzone, {Periodic solutions of {E}uler-{L}agrange equations in
  an {O}rlicz-{S}obolev space setting}, Revista de la Union Matematica
  Argentina(In press).

\bibitem{AciMaz19}
S.~Acinas, F.~Mazzone, Periodic solutions of {E}uler-{L}agrange equations in an
  {O}rlicz-{S}obolev space setting, Rev Union Mat Argent. 60.

\bibitem{ChmMak19}
M.~Chmara, J.~Maksymiuk, Mountain pass type periodic solutions for
  {E}uler--{L}agrange equations in anisotropic {O}rlicz--{S}obolev space, J.
  Math. Anal. Appl. 470~(1) (2019) 584--598.

\bibitem{ChmMak20}
M.~Chmara, J.~Maksymiuk, Mountain pass solutions to euler-lagrange equations
  with general anisotropic operator, (arXiv).

\bibitem{AmbRab73}
A.~Ambrosetti, P.~H. Rabinowitz, Dual variational methods in critical point
  theory and applications, J. Funct. Anal. 14~(4) (1973) 349 -- 381.

\bibitem{BarCia17}
G.~Barletta, A.~Cianchi, Dirichlet problems for fully anisotropic elliptic
  equations, Proc. Royal Soc. Ed. 147~(1) (2017) 25–60.

\bibitem{MawWil89}
J.~Mawhin, M.~Willem, {Critical point theory and {H}amiltonian systems},
  Springer-Verlag, New York, 1989.

\bibitem{Jab03}
Y.~Jabri, The mountain pass theorem, Vol.~95 of Encyclopedia of Mathematics and
  its Applications, Cambridge University Press, Cambridge, 2003, variants,
  generalizations and some applications.

\bibitem{YuaZha16}
Z.~Zhang, R.~Yuan, Existence of two almost homoclinic solutions for
  p(t)-laplacian hamiltonian systems with a small perturbation, Journal of
  Applied Mathematics and Computing 52~(1-2) (2016) 173--189.

\bibitem{AmbHaj18}
V.~Ambrosio, H.~Hajaiej, Multiple solutions for a class of nonhomogeneous
  fractional {S}chr\"{o}dinger equations in {$\Bbb{R}^N$}, J. Dynam.
  Differential Equations 30~(3) (2018) 1119--1143.

\bibitem{Sch05}
G.~Schappacher, {A notion of {O}rlicz spaces for vector valued functions},
  Appl. Math. 50~(4) (2005) 355--386.

\bibitem{ChmMak17}
M.~Chmara, J.~Maksymiuk, Anisotropic {O}rlicz-{S}obolev spaces of vector valued
  functions and {L}agrange equations, J. Math. Anal. Appl. 456~(1) (2017)
  457--475.

\bibitem{KhaKoz15}
M.~A. Khamsi, W.~M. Kozlowski, {Fixed point theory in modular function spaces},
  Birkh{\"a}user/Springer, 2015.

\bibitem{Mus83}
{Musielak, J.}, {Orlicz Spaces and Modular Spaces}, Springer, 1983.

\bibitem{Le14}
V.~K. Le, {On second order elliptic equations and variational inequalities with
  anisotropic principal operators}, Topol. Methods Nonlinear Anal. 44~(1)
  (2014) 41--72.

\bibitem{BreLie83}
H.~Brezis, E.~Lieb, {A relation between pointwise convergence of functions and
  convergence of functional}, Proc. Amer. Math. Soc. 88~(3) (1983) 486--490.

\bibitem{Eke74}
I.~Ekeland, On the variational principle, J. Math. Anal. Appl. 47 (1974)
  324--353.

\bibitem{Jan12}
J.~Janczewska, Two almost homoclinic solutions for second-order perturbed
  {H}amiltonian systems, Commun. Contemp. Math. 14~(4) (2012) 1250025, 9.

\end{thebibliography}
\end{document}